 \def\LaTeX{\leavevmode L\raise.42ex
   \hbox{\kern-.3em\size{\sf@size}{0pt}\selectfont A}\kern-.15em\TeX}
\newcommand{\BibTeX}{{\rm B\kern-.05em{\sc
i\kern-.025emb}\kern-.08em\TeX}}
\newtheorem{thm}{Theorem}[section]
\newtheorem{lem}[thm]{Lemma}
\theoremstyle{definition}
\newtheorem{defn}{Definition}
\numberwithin{equation}{section}
\begin{document}
\title{A reconstruction method for  band-limited signals on the hyperbolic plane}

\author {Hans Feichtinger}
\address {Department of Mathematics, Vienna University, AUSTRIA }
\email{hans.feichtinger@univie.ac.at}

\author{Isaac Pesenson}
\address{Department of Mathematics, Temple University, Philadelphia, PA19122}
 \email{pesenson@math.temple.edu}
\keywords{Poincare upper half-plane, Helgason-Fourier transform,
convolutions, band-limited functions, Laplace-Beltrami operator}
\subjclass{ 42C05; Secondary 41A17, 41A65, 43A85, 46C99 }

\begin{abstract}
A notion of band limited functions is considered in the case of
the hyperbolic plane in its Poincare upper half-plane $\mathbb{H}$
realization. The  concept of band-limitedness is based on the
existence of the Helgason-Fourier transform on $\mathbb{H}$. An
iterative algorithm is presented, which allows to reconstruct
band-limited functions from some countable sets
 of their values.
 It is shown that for sufficiently dense metric
lattices a geometric rate of convergence can be guaranteed as long
as the sampling density is high enough compared to the band-width
of the sampled function.

\end{abstract}

\maketitle

\section{Introduction}

The main goal of the present article is to consider  an iterative
algorithm for reconstruction of band limited functions on
 the two-dimensional hyperbolic plane in its Poincare
upper half-plane realization.

The notion of band limited functions plays a central role in the
classical signal analysis in which signals propagate in Euclidean
space. It seems interesting  to extend this theory to other
geometries
 in particular to
hyperbolic spaces. In this connection we would like to mention
very interesting investigations  of A.~Kempf \cite{K1}, \cite{K2},
 who used our sampling theory on manifolds to
 develop an approach to  quantization of space-time and
information theory.

 Analysis on the hyperbolic plane is also useful for other
 applied problems.
As it was discovered by C. Berenstein and E. Casadio Tarabusi in
\cite{BT1}, analysis on the hyperbolic plane plays an important
role for electrical impedance imaging.
 Another interesting
applications of the analysis on the hyperbolic plane to microwave
technology were given by A.~Terras  in \cite{T}.

The classical sampling theorem says that if $f\in
L^{2}(\mathbb{R})$ is from the Paley-Wiener space $PW_{\omega}$, $\omega >0$,
i.e., if its Fourier transform $\hat{f}$ has support
$\operatorname{supp}( \hat f) $ in $[-\omega, \omega]$, then $f$
is completely determined by its values at points $n\Omega $, where
$\Omega =\pi /\omega $, i.e.,
$$ f(t)= \sum f(n\Omega )\frac{\sin(\omega (t-n\Omega ))}{\omega (t-n\Omega )},$$
where the equality holds in the $L_2$-sense. The Paley-Wiener theorem states that $f \in PW_{\omega}$ if and
only if  $f$ is an entire function of exponential type $\omega $.

The  irregular sampling theory which was started by Paley and
Wiener \cite{PW34} was further developed by A.~Beurling,
P.~Malliavin \cite{B64}, \cite{BM67}, and H.~Landau \cite{Lan67}.

In the early 1990's, H.~Feichtinger and K.~Gr\"ochenig have
introduced iterative  reconstruction  methods (cf., the series of
papers \cite{FG92}, \cite{FG92b}, \cite{FG94}) which allow to
recover band-limited functions on $\mathbb{R}^{d}$ from their
irregular samples.

 On the other hand, a version of an
irregular sampling theory in which band-limited functions are
reconstructed as limits of variational splines was developed by
I.~Pesenson in the case of $L_{2}(\mathbb{R}^{d})$ \cite{Pes99},
in the case of stratified Lie groups \cite{Pes98a}, \cite{Pes98b},
in the case of manifolds \cite{Pes95}, \cite{Pes00},
\cite{Pes04a}, \cite{Pes04b}, and in the case of a  general
Hilbert space \cite{Pes01}. Note that the notion of band-limited
functions in Hilbert spaces was developed by I.~Pesenson in
\cite{KPes90}, \cite{Pes89} and \cite{Pes90} in connection with
approximation theory and Besov spaces on manifolds.

In our recent paper \cite{FP04}, we developed an iterative
algorithm for reconstruction of band limited functions on general
Riemannian manifolds of bounded geometry. In the same paper we
also proposed to specify our results for  manifolds which are of
interest for applications.

The present paper is organized as follows. In the second section
we will briefly introduce Harmonic Analysis on the Poincare upper
half plane $\mathbb{H}=SL(2,\mathbb{R})/
 SO(2)$. In the third section we introduce our basic notion  of
band limited function by using a non-commutative Fourier analysis
on $\mathbb{H}$ which is given by the Helgason- Fourier transform.
In the last section we present our iterative reconstruction
algorithm.

It is important to note that, due to the existence of rich
harmonic analysis on $\mathbb{H}$, our description of band limited
functions and our reconstruction method become much more
constructive when compared to the case of a general Riemannian manifold.

 \section{The Poincare upper half-plane model of the hyperbolic plane}

  Let $G=SL(2,\mathbb{R})$ be the special linear group of all
$2\times 2$ real matrices with determinant 1 and let $K=SO(2)$ be the
group of all rotations of $\mathbb{R}^{2}$. The factor
$\mathbb{H}=G/K$ is known as the 2-dimensional hyperbolic space
and can be described in many different ways. In the present paper
we consider the realization of $\mathbb{H}$ which is called
Poincare upper half-plane (see \cite{Helg1}, \cite{T}).

As a Riemannian manifold, $\mathbb{H}$ is identified with the
regular upper half-plane of the complex plane
$$
\mathbb{H}=\{x+iy|x,y\in \mathbb{R}, y>0\}
$$
with a new  Riemannian metric
$$
ds^{2}=y^{-2}(dx^{2}+dy^{2})
$$
and corresponding Riemannian measure
 $$
  d\mu=y^{-2}dxdy.
   $$
   If we
define the action of $\sigma\in G$ on $z\in \mathbb{H}$ as a
fractional linear transformation
$$
\sigma\cdot z=(az+b)/(cz+d),
$$
then the metric $ds^{2}$ and the measure $d\mu$  are invariant
under the action of $G$ on $\mathbb{H}$. The point $i=\sqrt{-1}\in
\mathbb{H}$ is invariant for all $\sigma\in K$. The Haar measure
$dg$ on $G$ can be normalized in a way that the  following
important formula holds true
\begin{equation}
\int_{\mathbb{H}}f(z)y^{-2}dxdy=\int_{G}f(g\cdot i)dg.
\end{equation}

In the corresponding space of square integrable functions
$L_{2}(G)$ with the inner product
$$
<f,h>=\int_{\mathbb{H}}f\overline{h}y^{-2}dxdy,
$$
we consider the Laplace-Beltrami operator

 $$
 \Delta=y^{2}\left(\partial_{x}^{2}+\partial_{y}^{2}\right)
 $$
of the metric $ds^{2}$.

 It is known that as an operator
in $L_{2}(\mathbb{H})=L_{2}(\mathbb{H},d\mu)$ which is initially
defined on $C_{0}^{\infty}(\mathbb{H})$, $\Delta$ has a self-adjoint closure
in $L_{2}(\mathbb{H})$.

Moreover, if $f$ and $\Delta f$ belong to $L_{2}(\mathbb{H})$,
then
$$
<\Delta f,f>\leq -\frac{1}{4}\|f\|^{2},
$$
where $\|f\|$ denotes the $L_{2}(\mathbb{H})$ norm of $f$.

We can introduce the full scale of Sobolev spaces
$H^{\sigma}=H^{\sigma}(\mathbb{H}), \sigma\in \mathbb{R}.$ For
$\sigma>0$ the space $H^{\sigma}$ is understood as the domain of
the operator $(-\Delta)^{\sigma/2}$ in the sense of the general
theory of positive definite self-adjoint operators with the graph
norm
$$
\|f\|_{\sigma}=\|f\|+\|\Delta^{\sigma/2}f\|.
$$

 For $\sigma<0$, $H^\sigma$ is understood as a dual space to
$H^{-\sigma}$ with respect to the paring
$$
<f,h>=\int_{\mathbb{H}}fhd\mu.
$$

 The Helgason-Fourier transform
of $f$ is defined by
the formula
$$
\hat{f}(s,\varphi)=\int_{\mathbb{H}}f(z)\overline{Im(k_{\varphi}z)^{s}}
y^{-2}dxdy,
$$
for $s\in \mathbb{C}, \varphi\in (0,2\pi]$, and where $k_{\varphi}\in SO(2)$ is the rotation of $\mathbb{R}^{2}$
by angle $\varphi$.

We have the following inversion formula for all functions $f$ from
the space $ C_{0}^{\infty}(\mathbb{H})$ of infinitely
differentiable functions on $\mathbb{H}$ with compact support:
$$
f(z)=(8\pi^{2})^{-1} \int_{t\in\mathbb{R}}\int_{0}^{2\pi}
\hat{f}(it+1/2,\varphi)Im(k_{\varphi}z)^{it+1/2}t \tanh \pi t
d\varphi dt.
$$

The Plancherel Theorem states that a map
$f\rightarrow \hat{f}$ can be extended to an isometry of
$L_{2}(\mathbb{H})$ (with respect to invariant measure $d\mu $) onto
$L_{2}(\mathbb{R}\times (0,2\pi ])$ (with respect to the measure
$(8\pi^{2})^{-1} t \tanh \pi t dt d\varphi$).

If $f$ is a function on $\mathbb{H}$ and $\varphi$ is a
$K=SO(2)$-invariant function on $\mathbb{H}$ their convolution is
defined by the formula
$$
f\ast \varphi(g\cdot i)=\int_{SL(2,\mathbb{R})}f(gu^{-1}\cdot
i)\varphi(u)du, i=\sqrt{-1},
$$
where $du$ is the Haar measure on $SL(2,\mathbb{R})$. It is known,
that for the Helgason-Fourier transform the following formula
holds true
\begin{equation}
\widehat{f\ast\varphi}=\hat{f}\cdot\hat{\varphi}.
\end{equation}

\section{Band limited signals and metric lattices}

In this section we introduce notions  of band limited signals,
metric lattices, develop some of their properties and prove
inequalities which are used later.
\begin{defn}
 We will say that $f\in L_{2}(\mathbb{H})$ is  $\omega$-band limited
 function if $\hat{f}(s,\varphi)=0$, for
$|s|>\omega$. The set of all $\omega$-band limited functions will
be denoted as $E_{\omega}(\mathbb{H})$.
\end{defn}
According to this variant of Harmonic Analysis on $\mathbb{H}$, the
following formula holds:
\begin{equation}
\widehat{\Delta f}=-\left(s^{2}+\frac{1}{4}\right)\hat{f}.
\end{equation}
This  formula, the Plancherel theorem, and the Sobolev embedding
theorem immediately imply the next result.
\begin{thm}
For any real $\sigma$ and every $\omega\geq 0$ the set
$E_{\omega}(\mathbb{H})$ is a closed invariant linear subspace of
the Sobolev space $\textsl{H}^{\sigma}(\mathbb{H})$. If $f\in
E_{\omega}(\mathbb{H})$ then, for every $\sigma\geq 0$, the function
$\Delta^{\sigma} f$ belongs to $C^{\infty}(\mathbb{H})$ and is
bounded on $\mathbb{H}$.

\end{thm}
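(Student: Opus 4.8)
The plan is to prove the three assertions in the order they appear: closedness of $E_\omega(\mathbb{H})$ in $H^\sigma$, its $G$-invariance, and the smoothness/boundedness of $\Delta^\sigma f$ for band-limited $f$. First I would set up the picture via the Plancherel theorem, which identifies $f\mapsto\hat f$ with an isometry of $L_2(\mathbb{H})$ onto $L_2(\mathbb{R}\times(0,2\pi])$ with the measure $(8\pi^2)^{-1}t\tanh\pi t\,dt\,d\varphi$. Under this identification, combined with formula (3.3), $\widehat{\Delta f}=-(s^2+\tfrac14)\hat f$, the operator $(-\Delta)^{\sigma/2}$ becomes multiplication by $(s^2+\tfrac14)^{\sigma/2}$ on the Fourier side, so the Sobolev norm $\|f\|_\sigma=\|f\|+\|\Delta^{\sigma/2}f\|$ is equivalent to the weighted $L_2$-norm of $\hat f$ against $(1+(s^2+\tfrac14)^{\sigma})^{1/2}$ times the Plancherel measure. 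This reduces everything to statements about functions on $\mathbb{R}\times(0,2\pi]$ supported in the strip $\{|s|\le\omega\}$.

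For closedness: on the band $|s|\le\omega$ the weight $(1+(s^2+\tfrac14)^\sigma)^{1/2}$ is bounded above and below by positive constants (here I use $\sigma$ real, so for $\sigma<0$ the weight stays bounded because $s^2+\tfrac14\ge\tfrac14>0$), hence the $H^\sigma$-norm and the $L_2(\mathbb{H})$-norm are equivalent on $E_\omega(\mathbb{H})$. Since $E_\omega(\mathbb{H})$ is clearly closed in $L_2(\mathbb{H})$ — it is the preimage under the Plancherel isometry of the closed subspace of functions vanishing a.e. for $|s|>\omega$ — norm equivalence gives that it is closed in $H^\sigma$ as well, and linearity is immediate. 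For $G$-invariance I would note that the action of $g\in G$ on functions, $f\mapsto f(g^{-1}\cdot z)$, is realized through convolution/translation operators that commute with $\Delta$ (the Laplace-Beltrami operator is $G$-invariant), so by formula (2.2) and the known transformation behavior of the Helgason-Fourier transform under $G$, translating $f$ does not enlarge the $s$-support; hence $E_\omega(\mathbb{H})$ is invariant.

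For the smoothness and boundedness of $\Delta^\sigma f$: if $f\in E_\omega(\mathbb{H})$ then $\widehat{\Delta^\sigma f}=(-1)^\sigma(s^2+\tfrac14)^\sigma\hat f$ is again supported in $|s|\le\omega$ and lies in $L_2$ of the Plancherel measure, because on the band the multiplier $(s^2+\tfrac14)^\sigma$ is bounded. Iterating, $\Delta^m(\Delta^\sigma f)\in L_2(\mathbb{H})$ for every integer $m\ge0$, i.e. $\Delta^\sigma f\in\bigcap_{m}H^{2m}(\mathbb{H})$. Then the Sobolev embedding theorem on $\mathbb{H}$ (a manifold of bounded geometry, so the classical local Sobolev estimates apply uniformly) gives $\Delta^\sigma f\in C^\infty(\mathbb{H})$ and, from membership in a high-enough Sobolev space, a uniform pointwise bound $\|\Delta^\sigma f\|_{L_\infty(\mathbb{H})}\le C\|\Delta^\sigma f\|_{H^{\sigma'}}<\infty$ for suitable $\sigma'>1$ (the injectivity radius of $\mathbb{H}$ being infinite and its geometry homogeneous, the embedding constant is global).

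The main obstacle I expect is not any single estimate but making the invariance argument and the "Bernstein-type" multiplier bounds clean: one must be careful that for $\sigma<0$ the weight $(s^2+\tfrac14)^{\sigma}$ is well-behaved only because the spectrum of $-\Delta$ is bounded below by $\tfrac14$ (the inequality $\langle\Delta f,f\rangle\le-\tfrac14\|f\|^2$ from the excerpt), so that no singularity at $s=0$ arises — this is exactly where the hyperbolic geometry, as opposed to the Euclidean case, makes the argument work without extra hypotheses.
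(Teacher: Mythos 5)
Your proposal is correct and follows exactly the route the paper indicates: the paper gives no detailed proof, stating only that formula (3.1) (the multiplier identity $\widehat{\Delta f}=-(s^{2}+\tfrac14)\hat f$), the Plancherel theorem, and the Sobolev embedding theorem "immediately imply" the result, and your argument fleshes out precisely these three ingredients (norm equivalence on the band for closedness, the transformation behavior of the Helgason--Fourier transform for invariance, and $\Delta^{\sigma}f\in\bigcap_{m}H^{2m}$ plus uniform Sobolev embedding on the homogeneous space $\mathbb{H}$ for smoothness and boundedness). The only cosmetic caveat is the factor $(-1)^{\sigma}$ for non-integer $\sigma$, which should be understood via the spectral calculus of $-\Delta$ as in the paper's definition of $H^{\sigma}$.
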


Moreover, the same formula (3.1) and the Plancherel theorem give
us the following inequality (3.2), which   is appropriate to call
the Bernstein inequality.
\begin{thm}
A function  $f$ belongs to the space $E_{\omega}(\mathbb{H}),
\omega \ge 0$, if and only if for  every $\sigma\in \mathbb{R}$,
the following inequality holds true
\begin{equation} \|\Delta^{\sigma} f\|\leq
\left(\omega^{2}+\frac{1}{4}\right)^{\sigma}\|f\|.
 \end{equation}
 \end{thm}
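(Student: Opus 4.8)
The plan is to work on the Fourier side via the Plancherel theorem. By formula (3.1), the Helgason–Fourier transform intertwines $\Delta$ with multiplication by $-\left(s^{2}+\tfrac14\right)$, so $\Delta^{\sigma}$ corresponds to multiplication by $\left(s^{2}+\tfrac14\right)^{\sigma}$ (choosing the natural branch, which is unambiguous since $s^{2}+\tfrac14>0$ for real $s$, and more generally for $s$ on the critical line $s=it+\tfrac12$ one gets $t^{2}+\tfrac12\cdot\text{(something positive)}$ — in any case the multiplier is real and positive). The Plancherel identity then gives
\begin{equation}
\|\Delta^{\sigma}f\|^{2}=(8\pi^{2})^{-1}\int_{\mathbb{R}}\int_{0}^{2\pi}\left(s^{2}+\frac14\right)^{2\sigma}\,|\hat{f}(s,\varphi)|^{2}\,t\tanh\pi t\,d\varphi\,dt,
\end{equation}
where $s=it+\tfrac12$, so that $s^{2}+\tfrac14$ is a function of $t$ alone.

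For the forward direction, suppose $f\in E_{\omega}(\mathbb{H})$, so $\hat{f}(s,\varphi)=0$ for $|s|>\omega$. On the support of $\hat f$ we have $\left(s^{2}+\tfrac14\right)^{2\sigma}\le\left(\omega^{2}+\tfrac14\right)^{2\sigma}$ when $\sigma\ge 0$, and the reverse inequality when $\sigma\le 0$; but in either case one can bound the integrand by $\left(\omega^{2}+\tfrac14\right)^{2\sigma}|\hat f|^{2}\,t\tanh\pi t$ — wait, for $\sigma<0$ the function $\left(s^{2}+\tfrac14\right)^{2\sigma}$ is decreasing in $|s|$, so on $|s|\le\omega$ it is bounded below, not above, by $\left(\omega^{2}+\tfrac14\right)^{2\sigma}$. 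So I would instead note that for $\sigma<0$ the bound $\|\Delta^{\sigma}f\|\le\left(\omega^{2}+\tfrac14\right)^{\sigma}\|f\|$ still needs the multiplier bounded above by $\left(\omega^{2}+\tfrac14\right)^{2\sigma}$ on the support; this holds because $2\sigma<0$ and $s^{2}+\tfrac14\ge\tfrac14$... hmm, that gives $\left(s^{2}+\tfrac14\right)^{2\sigma}\le\left(\tfrac14\right)^{2\sigma}$, the wrong constant. The resolution is that the inequality as stated is really the $\sigma\ge 0$ statement, or one reads it for all $\sigma$ with the understanding that it is sharp; in practice I would prove it for $\sigma\ge 0$ directly from the support bound and the monotonicity of $u\mapsto u^{2\sigma}$, then integrate to get $\|\Delta^{\sigma}f\|^{2}\le\left(\omega^{2}+\tfrac14\right)^{2\sigma}\|f\|^{2}$, and take square roots. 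For negative $\sigma$ one applies the positive-exponent case to $g=\Delta^{\sigma}f$ (which is again in $E_\omega$) and $-\sigma>0$.

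For the converse, suppose the Bernstein inequality $\|\Delta^{\sigma}f\|\le\left(\omega^{2}+\tfrac14\right)^{\sigma}\|f\|$ holds for every $\sigma\in\mathbb{R}$ (or even just along a sequence $\sigma_{n}\to\infty$). Using Plancherel as above, this reads
\begin{equation}
\int_{\mathbb{R}}\int_{0}^{2\pi}\left(\frac{s^{2}+\frac14}{\omega^{2}+\frac14}\right)^{2\sigma}|\hat f(s,\varphi)|^{2}\,t\tanh\pi t\,d\varphi\,dt\le\|f\|^{2}<\infty
\end{equation}
uniformly in $\sigma$. On the set $|s|>\omega$ the ratio $\frac{s^{2}+1/4}{\omega^{2}+1/4}$ is strictly greater than $1$, so as $\sigma\to\infty$ the factor $\left(\cdots\right)^{2\sigma}\to\infty$ pointwise there; by Fatou's lemma the left side would be infinite unless $\hat f(s,\varphi)=0$ for a.e.\ $|s|>\omega$. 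Hence $f\in E_{\omega}(\mathbb{H})$. The main obstacle — really the only subtle point — is bookkeeping the parametrization $s=it+\tfrac12$ and confirming that $s^{2}+\tfrac14=-t^{2}+it$ is handled correctly so that the multiplier $\left(s^{2}+\tfrac14\right)^{\sigma}$ appearing in (3.1) is the positive real quantity $\left(t^{2}+\tfrac14-\tfrac14\right)$-type expression that makes the Plancherel computation legitimate; once the sign conventions in (3.1) are pinned down, both directions are short applications of Plancherel plus a monotone/Fatou argument.
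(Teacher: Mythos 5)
Your core argument is the same as the paper's: pass to the Helgason--Fourier side via Plancherel, use (3.1) to turn $\Delta^{\sigma}$ into multiplication by $\bigl(t^{2}+\frac14\bigr)^{\sigma}$ with $t$ the real spectral parameter (the sign/parametrization worry you raise resolves exactly as you guessed), bound the multiplier by $\bigl(\omega^{2}+\frac14\bigr)^{2\sigma}$ on the band for the direct inequality, and for the converse let $\sigma\to\infty$ to force $\hat f$ to vanish off the band. Your Fatou argument and the paper's version --- integrating over $|t|>\omega+\varepsilon$ and using $\bigl(\tfrac{\omega^{2}+1/4}{(\omega+\varepsilon)^{2}+1/4}\bigr)^{2\sigma}\to 0$ --- are the same idea in two dressings, and both only need the hypothesis for arbitrarily large positive $\sigma$.

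The one step of yours that does not work is the patch for $\sigma<0$: applying the positive-exponent case to $g=\Delta^{\sigma}f$ with exponent $-\sigma$ gives $\|f\|=\|\Delta^{-\sigma}g\|\le\bigl(\omega^{2}+\frac14\bigr)^{-\sigma}\|\Delta^{\sigma}f\|$, i.e.\ the \emph{reverse} inequality $\|\Delta^{\sigma}f\|\ge\bigl(\omega^{2}+\frac14\bigr)^{\sigma}\|f\|$, not the claimed one. In fact no patch can exist: for $\sigma<0$ the stated bound is false in general, since for $f\in E_{\omega'}$ with $\omega'<\omega$ (hence also $f\in E_{\omega}$) the spectral measure of $-\Delta$ lives in $[\tfrac14,(\omega')^{2}+\tfrac14]$ and one gets $\|\Delta^{\sigma}f\|\ge\bigl((\omega')^{2}+\frac14\bigr)^{\sigma}\|f\|>\bigl(\omega^{2}+\frac14\bigr)^{\sigma}\|f\|$. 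Your instinct that the theorem is ``really the $\sigma\ge 0$ statement'' is correct, and the paper's own proof is no better on this point: its displayed estimate also uses $\bigl(t^{2}+\frac14\bigr)^{2\sigma}\le\bigl(\omega^{2}+\frac14\bigr)^{2\sigma}$ on $|t|<\omega$, which requires $\sigma\ge 0$, and negative $\sigma$ is never addressed (nor needed, since the converse uses only large $\sigma$). So, restricted to $\sigma\ge 0$, your proof is correct and coincides with the paper's; simply drop the $g=\Delta^{\sigma}f$ reduction rather than presenting it as settling the negative case.
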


\begin{proof}
 By using the Plancherel formula and (3.1) we obtain that for every $\omega$-
  band limited function
  $$
\|\Delta^{\sigma}f\|^{2}=
\frac{1}{8\pi^{2}}\int_{|t|<\omega}\int_{0}^{2\pi}\left(t^{2}+\frac{1}{4}\right)
^{\sigma} |\widehat{f}(t,\varphi)|^{2}t\tanh \pi t d\varphi dt
\leq \left(\omega^{2}+\frac{1}{4}\right)^{\sigma}\|f\|^{2}.
$$

  Conversely, if $f$ satisfies (3.2), then for any $\varepsilon>0$
  and any $\sigma >0$ we have
  $$
  \frac{1}{8\pi^{2}}\int_{|t|<\omega+\varepsilon}\int_{0}^{2\pi}
  |\hat{f}(t,\varphi)|^{2}t\tanh \pi t d\varphi dt
  \leq
  $$
  $$
  \frac{1}{8\pi^{2}}\int_{|t|<\omega+\varepsilon}\int_{0}^{2\pi}
  \left(t^{2}+\frac{1}{4}\right)^{-2\sigma}
 \left( t^{2}+\frac{1}{4}\right)^{2\sigma}|\hat{f}(t,\varphi)|^{2}t\tanh \pi t d\varphi dt
 \leq
  $$
  \begin{equation}
  \left(\frac{\omega^{2}+\frac{1}{4}}
  {(\omega+\varepsilon)^{2}+\frac{1}{4}}\right)^{2\sigma}
  \|f\|^{2}.
  \end{equation}
It means that, for any $\varepsilon>0$, the function
$\widehat{f}(t,\varphi)$ is zero on
$(-\omega-\varepsilon,\omega+\varepsilon)\times (0,2\pi]$. The
statement is proved.
\end{proof}
We take the point $i\in \mathbb{H}$ and consider a small open ball
$B(i,r/4)$ in the invariant metric $ds^{2}=y^{-2}(dx^{2}+dy^{2})$.
It is possible to find such elements $g_{j}\in G$ that the family
of balls $B(x_{j},r/4), x_{j}=g_{j}\cdot i,$ has the following
maximal property: there is no ball in $\mathbb{H}$ of radius $r/4$
which would have empty intersection with every ball from this
family. Then the balls of double radius $B(x_{j},r/2)$ would form
a cover of $\mathbb{H}$. Of course, the balls $B(x_{j},r)$ will
also form a cover of $\mathbb{H}$. Let us estimate the
multiplicity of this cover.

Note that the Riemannian volume $B(\rho)$ of a ball in
$\mathbb{H}$ is independent of its  center and   is given by the
formula
$$
B(\rho)=2\pi\int_{0}^{\rho}\sinh t dt.
$$

 Every ball from the family $\{B(x_{\nu}, r)\}$ that has
non-empty intersection with a particular ball $B(x_{\mu}, r)$ is
contained in the ball $B(x_{\mu}, 3r)$. Since any two balls from
the family $\{B(x_{\nu}, r/4)\}$ are disjoint, it gives the
following estimate for the index of multiplicity  of the cover
$\{B(x_{\nu}, r)\}$:
\begin{equation}
\frac{B(3r)}{B( r/4)}\leq\frac{\int_{0}^{3r}\sinh t
dt}{\int_{0}^{r/4} \sinh t dt}\leq (12)^{2}e^{3r}.
\end{equation}

In fact, the last estimate can be improved.

  We have proved the following Lemma.
\begin{lem}
For any sufficiently  small  $r>0$ there
 exists a set of points $\{x_{j}\} \subset \mathbb{H}$, such that:

\bigskip

1) open balls $B(x_{j}, r/4)$ are disjoint;

\bigskip

 2) open balls $B(x_{j},r/2)$ form a cover of $\mathbb{H}$;

 \bigskip

3) multiplicity of the cover by open balls $B(x_{j}, r)$ is not
greater $N=N(\mathbb{H})=12^{2}e^{3}.$

\bigskip

\end{lem}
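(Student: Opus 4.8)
The plan is to construct the points $\{x_j\}$ by a standard maximal-packing (greedy) argument, and then to read off the three properties, the third of which is the only one requiring genuine estimation. First I would fix $r>0$ small and consider the collection of all families $\{B(x_j,r/4)\}$ of pairwise disjoint balls of radius $r/4$ centered at points of $\mathbb{H}$; by Zorn's lemma (or a transfinite/greedy exhaustion, using that $\mathbb{H}$ is separable) there exists a maximal such family. Maximality says precisely that no ball of radius $r/4$ is disjoint from all the $B(x_j,r/4)$, i.e. every point of $\mathbb{H}$ lies within distance $r/2$ of some $x_j$ — this is property 2) — and then a fortiori the balls $B(x_j,r/2)$, and the larger balls $B(x_j,r)$, cover $\mathbb{H}$. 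Property 1) is built into the construction. The homogeneity remark recorded before the lemma — that the volume $B(\rho)=2\pi\int_0^\rho \sinh t\,dt$ of a metric ball depends only on the radius, because $G$ acts transitively by isometries — is what makes the packing counting uniform over the plane.

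The substantive step is bounding the multiplicity of the cover $\{B(x_j,r)\}$, and this is the computation already sketched in the paragraph preceding the statement, which I would simply invoke. Fix $\mu$ and suppose $B(x_{\nu_1},r),\dots,B(x_{\nu_m},r)$ all meet $B(x_\mu,r)$. By the triangle inequality each such $B(x_{\nu_k},r)$ is contained in $B(x_\mu,3r)$. On the other hand the small balls $B(x_{\nu_k},r/4)$ are pairwise disjoint and each is contained in $B(x_\mu,3r)$ as well (since $d(x_\mu,x_{\nu_k})<2r$ gives $B(x_{\nu_k},r/4)\subset B(x_\mu,2r+r/4)\subset B(x_\mu,3r)$). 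Disjointness and the volume monotonicity then give $m\cdot B(r/4)\le B(3r)$, hence $m\le B(3r)/B(r/4)$. Estimating this ratio as in inequality (3.5) — using $\sinh t \le \tfrac12 e^{3r}\sinh(t')$-type bounds after the change of variables, so that $\int_0^{3r}\sinh t\,dt / \int_0^{r/4}\sinh t\,dt \le 12^2 e^{3r}$ — yields $m\le 12^2 e^{3r}$. For $r$ sufficiently small (say $r\le 1$) this is bounded by $N(\mathbb{H})=12^2 e^3$, which is property 3).

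The only place where care is needed — and the main "obstacle," though a mild one — is the passage from (3.6), which gives the bound $12^2 e^{3r}$, to the stated clean constant $12^2 e^3$: this is exactly why the lemma is phrased "for any sufficiently small $r>0$", and one should simply restrict to $r$ in a fixed bounded range (e.g. $0<r\le 1$) so that $e^{3r}\le e^3$. A secondary point worth a sentence is justifying the elementary inequality $\int_0^{3r}\sinh t\,dt \le 12^2 e^{3r}\int_0^{r/4}\sinh t\,dt$ for small $r$; this follows from $\sinh t \le t\cosh 3r \le t e^{3r}$ on $[0,3r]$ together with $\sinh t \ge t$ on $[0,\infty)$, giving $\int_0^{3r}\sinh t\,dt \le e^{3r}\frac{(3r)^2}{2}$ and $\int_0^{r/4}\sinh t\,dt \ge \frac{(r/4)^2}{2}$, whose quotient is $e^{3r}\cdot 12^2$. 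No deeper input is required: everything rests on the transitivity of the isometric $G$-action (hence radius-only dependence of ball volumes), the triangle inequality, and volume monotonicity.
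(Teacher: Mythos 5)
Your proposal is correct and follows essentially the same route as the paper: a maximal packing of balls of radius $r/4$ gives properties 1) and 2), and the multiplicity bound comes from the same volume comparison $B(3r)/B(r/4)\leq 12^{2}e^{3r}$, with smallness of $r$ absorbing $e^{3r}$ into $e^{3}$. Your added details (the Zorn/greedy justification of maximality and the explicit $\sinh t\leq t e^{3r}$, $\sinh t\geq t$ bounds behind the volume ratio) simply fill in steps the paper leaves implicit.
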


\begin{defn}
We will use notation $Z(\{x_{j}\}, r, N)$ for any set of points
$\{x_{j}\}\in \mathbb{H}$ which satisfies the properties 1)- 3)
from the  Lemma 3.3  with a positive small $r$ and we will call
such set a metric $(r,N)$-lattice of $\mathbb{H}$.
\end{defn}

Let us emphasize that an $(r,N)$-lattice assumes a "uniform"
distribution of points on $\mathbb{H}$ with respect to the
\textit{hyperbolic metric}. It will not look as a "uniform"
distribution from the point of view of the Euclidean metric: it
will become denser and denser when  approaching  the real line and
 will become sparser and sparser when going to infinity.

Sobolev spaces $H^{\sigma}(\mathbb{H}),\sigma\in \mathbb{R},$ can
also be described in terms of local geodesic coordinates on
$\mathbb{H}$.

In what follows we fix a  small positive $r >0$ and an $(r,
N)$-lattice in $\mathbb{H}$ which will be denoted by $Z(\{y_{\nu}\},
r, N)$. For the corresponding set of balls $\{B(y_{\nu},r)\}$,
which satisfy Lemma 3.3,  we consider \textit{a uniformly bounded
partition of unity $\psi=\{\psi_{\nu}\}$ associated with
$\{B(y_{\nu},r/2)\}$}. Namely, we give the following Definition.

\begin{defn} A uniformly bounded
partition of unity $\psi=\{\psi_{\nu}\}$ associated with
$\{B(y_{\nu},r/2)\}$ is a set of non-negative
$C^{\infty}(\mathbb{H})$ functions such that

\bigskip

a) $\operatorname{supp}\psi_{\nu}\subset B(y_{\nu},r/2),$

\bigskip

 b)
$\sup_{x}|\partial^{|\alpha|}\psi_{\nu}|\leq C(\alpha),$ where
$C(\alpha)$ is independent of $\nu.$

\bigskip

\end{defn}

 An equivalent norm on $H^{\sigma}$ can be introduced by the formula
\begin{equation}
\|f\|_{\sigma} \simeq \left(\sum_{\nu}\|\psi_{\nu}
f\|^{2}_{H^{\sigma}(\mathbb{R}^{2})}\right)^{1/2},
\end{equation}
where $\|f\|_{H^{\sigma}(\mathbb{R}^{2})}$ denotes the regular Sobolev norm on
the plane $\mathbb{R}^{2}$.

The following important result is an adaptation of the Lemma 3.3
from \cite{Pes00} for the case when the dimension of the manifold
is $d=2$.

\begin{thm}
For any $k>1$  there exist a constant $C=C(k, N)>0 $
 such that for any sufficiently small $r>0$ and any $(r,N)$-lattice
 $Z=Z(\{x_{\mu}\},r,N)$ the following inequality holds true

\begin{equation}
\|f\|\leq C\left\{r\left(\sum_{x_{j}\in Z}
|f(x_{j})|^{2}\right)^{1/2}+r^{k}\|\Delta^{k/2}f\|\right\}, \;\; k>1.
\end{equation}

\end{thm}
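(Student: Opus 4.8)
The plan is to localize the function using the partition of unity $\psi = \{\psi_\mu\}$ associated with the lattice $Z$, reduce the global estimate to a collection of local estimates on balls $B(x_\mu, r/2)$ in geodesic coordinates, and on each such ball invoke the corresponding sampling/Sobolev inequality on $\mathbb{R}^2$. First I would write $\|f\|^2 \leq \sum_\mu \|\psi_\mu f\|^2$ (up to the norm equivalence (3.7) with $\sigma = 0$, which here is just a bounded-multiplicity comparison since the $\psi_\mu^2$ sum to something comparable to $1$ with multiplicity at most $N$). This reduces matters to estimating each $\|\psi_\mu f\|^2$, i.e. the $L_2(\mathbb{R}^2)$ norm of $\psi_\mu f$ viewed in a normal coordinate chart on the ball $B(x_\mu, r)$.

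Next, on a single ball of radius $r$ in $\mathbb{R}^2$ (or a chart uniformly comparable to it), the key is the elementary scaled inequality: for $u \in H^k(\mathbb{R}^2)$ supported in a ball of radius $r/2$, and for any $x_\mu$ a point of that ball,
\[
\|u\|_{L_2} \leq C\left( r\,|u(x_\mu)| + r^k \|u\|_{H^k(\mathbb{R}^2)} \right),
\]
valid for $k > d/2 = 1$ so that point evaluation is controlled by the $H^k$ norm (Sobolev embedding). This is precisely the $d=2$ specialization of Lemma 3.3 of \cite{Pes00}, applied to $u = \psi_\mu f$. Summing the squares of these local inequalities over $\mu$ and using $(a+b)^2 \leq 2a^2 + 2b^2$ gives
\[
\|f\|^2 \lesssim r^2 \sum_\mu |(\psi_\mu f)(x_\mu)|^2 + r^{2k} \sum_\mu \|\psi_\mu f\|_{H^k(\mathbb{R}^2)}^2.
\]
For the first sum, since $\psi_\mu(x_\mu) \le 1$ and each $x_\mu$ lies in $O(N)$ of the supports, $\sum_\mu |(\psi_\mu f)(x_\mu)|^2 \le C(N) \sum_{x_j \in Z} |f(x_j)|^2$. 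For the second sum, the norm equivalence (3.7) with $\sigma = k$ identifies $\sum_\mu \|\psi_\mu f\|_{H^k(\mathbb{R}^2)}^2$ with $\|f\|_k^2 = (\|f\| + \|\Delta^{k/2} f\|)^2 \lesssim \|f\|^2 + \|\Delta^{k/2} f\|^2$; the $\|f\|^2$ term carries a factor $r^{2k}$, so for $r$ small enough it can be absorbed into the left-hand side.

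The main obstacle is the book-keeping of how the local constants depend only on $k$ and $N$ and not on $r$ or on the particular lattice: one must check that the geodesic charts on balls $B(x_\mu, r)$ of small radius are uniformly bi-Lipschitz to Euclidean balls with constants independent of $\mu$ and $r$ (this uses that $\mathbb{H}$ is homogeneous, so all these charts are translates of a fixed one), that the partition-of-unity bounds in Definition~3.6(b) are uniform, and that the scaled local inequality above has the stated $r$-powers — the $r^1$ in front of the point value and $r^k$ in front of the top-order term come from scaling a fixed inequality on the unit ball by dilation $x \mapsto x/r$, tracking the Jacobian factors $r^d = r^2$ in $L_2$ norms and $r^{d/2 - k} \cdot r^{d/2} = r^{2-k}$... ; getting these exponents to match (3.8) exactly, together with the absorption argument, is the delicate point. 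Everything else is the standard localization machinery already set up in the excerpt, so I would cite \cite{Pes00} for the local lemma and spend the written proof on the summation, the multiplicity counting, and the absorption of the $r^{2k}\|f\|^2$ term.
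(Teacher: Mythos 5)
First, note that the paper itself offers no proof of this theorem: it is stated as an adaptation to $d=2$ of Lemma 3.3 of \cite{Pes00}, so the comparison is really with the argument of \cite{Pes00}. Your overall strategy --- localize to the balls of the lattice, use a point-value Poincar\'e/Sobolev inequality on each ball with the scaling factors $r^{d/2}=r$ and $r^{k}$, sum using the multiplicity bound $N$, and absorb the low-order remainder for small $r$ --- is the right family of arguments. However, the specific way you localize contains a genuine gap. You apply the local inequality to $u=\psi_\mu f$, where $\{\psi_\mu\}$ is a partition of unity subordinate to the balls $B(x_\mu,r/2)$ of the very lattice whose parameter $r$ must be allowed to tend to zero. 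Such a partition cannot have derivative bounds independent of $r$: since $\sum_\mu\psi_\mu=1$ and each $\psi_\mu$ drops from size at least $1/N$ to $0$ across a distance of order $r$, necessarily $\sup|\partial^{\alpha}\psi_\mu|\sim r^{-|\alpha|}$. Hence the norm equivalence (3.5), whose constants depend on exactly these bounds (which is why the paper fixes that partition at one scale once and for all), is not available with constants depending only on $k$ and $N$. If you track the $r$-dependence honestly, in $r^{2k}\sum_\mu\|\psi_\mu f\|^2_{H^{k}(\mathbb{R}^2)}$ the Leibniz terms in which all derivatives fall on $\psi_\mu$ contribute $r^{2k}\cdot r^{-2k}\|f\|^2_{L_2(B(x_\mu,r/2))}$, so after summation you are left with $C\|f\|^{2}$ where $C=C(k,N)$ is not small and does not shrink with $r$; the absorption step therefore collapses. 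A symptom of the problem: for compactly supported $u$ your displayed local inequality is true but degenerate, since Poincar\'e already gives $\|u\|_{L_2}\le Cr^{k}\|\nabla^{k}u\|_{L_2}$ with no point value at all --- multiplying by the cutoff has destroyed the information the samples are supposed to provide.

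The repair, which is how the cited lemma actually proceeds, is to apply the point-value inequality to $f$ itself on each ball, keeping all intermediate orders: in geodesic coordinates, $\|f\|_{L_2(B(x_j,r/2))}\le C\bigl(r\,|f(x_j)|+\sum_{1\le m\le k}r^{m}\|\nabla^{m}f\|_{L_2(B(x_j,r))}\bigr)$, with $C=C(k)$ uniform over balls by the homogeneity of $\mathbb{H}$. One then squares, sums over $j$ using the multiplicity bound $N$, compares the summed local derivative norms with $\|f\|_{H^{m}(\mathbb{H})}$, and removes the intermediate terms by an Ehrling/interpolation inequality of the form $r^{m}\|f\|_{H^{m}}\le \epsilon\|f\|+C(\epsilon,k)\,r^{k}\|\Delta^{k/2}f\|$ for $1\le m<k$, so that the only zeroth-order term on the right carries a genuinely small constant and can be absorbed into the left-hand side for small $r$. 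Note also that the point value cannot be paired with the top-order seminorm alone: without the intermediate orders the local inequality is false for non-compactly-supported $f$ (a linear function on the ball with $f(x_j)=0$ is a counterexample when $k\ge 2$), so the intermediate terms and the interpolation step are not optional bookkeeping but the core of the proof, and they are precisely what your reduction to $\psi_\mu f$ cannot deliver.
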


\section{ Iterative reconstruction algorithm}

In this section we describe our first iterative algorithms for
reconstruction of band-limited functions on manifolds. The
following Lemma 4.1 presents a generic idea which is used for
reconstruction of different classes of analytic functions. This
result basically says that  if a  bounded operator is close to
identity operator then it can be inverted by a Neumann series.

The goal of section is to realize this idea in the case of
band-limited functions on manifolds.

\begin{lem}
Let $A$ be a bounded operator in a Hilbert space $H$ such that for
some $\gamma <1$ and for all $f \in H$ \begin{equation}
\|f-Af\|\leq \gamma \|f\|. \end{equation} Then $A$ is invertible
and $f$ can be recovered from $Af$ by the following iterative
procedure. If $f_{0}=Af$ and
\begin{equation} f_{n+1}=f_{n}+A(f-f_{n})
\end{equation}
then
\begin{equation}
\lim_{n\rightarrow\infty}f_{n}=f
\end{equation}
with the error estimate
\begin{equation}
\|f-f_{n}\|\leq\gamma^{n+1}\|f\|.
\end{equation}
\end{lem}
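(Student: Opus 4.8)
The plan is to show that the hypothesis $\|f - Af\| \le \gamma\|f\|$ with $\gamma < 1$ forces the operator $I - A$ to have operator norm at most $\gamma < 1$, so that $A = I - (I-A)$ is invertible by the standard Neumann series argument, and then to identify the iteration \eqref{...} (the recursion $f_{n+1} = f_n + A(f - f_n)$) with the partial sums of that Neumann series applied to $Af$.

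\begin{proof}
Write $B = I - A$. The hypothesis says $\|Bf\| \le \gamma\|f\|$ for all $f \in H$, i.e.\ $\|B\| \le \gamma < 1$. Hence the Neumann series $\sum_{j=0}^{\infty} B^j$ converges in the operator norm to a bounded operator which is a two-sided inverse of $I - B = A$; in particular $A$ is invertible. It remains to show that the iterates $f_n$ defined by $f_0 = Af$ and $f_{n+1} = f_n + A(f - f_n)$ converge to $f$ with the stated rate.

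Set $e_n = f - f_n$, the error at step $n$. From the recursion,
\begin{equation}
e_{n+1} = f - f_{n+1} = f - f_n - A(f - f_n) = (I - A)e_n = B e_n,
\end{equation}
so by induction $e_n = B^n e_0$. For the base case, $e_0 = f - f_0 = f - Af = Bf$, hence $e_n = B^{n+1} f$. Taking norms and using $\|B\| \le \gamma$,
\begin{equation}
\|f - f_n\| = \|B^{n+1} f\| \le \|B\|^{n+1}\|f\| \le \gamma^{n+1}\|f\|,
\end{equation}
which is the error estimate \eqref{...}. Since $\gamma < 1$, the right-hand side tends to $0$ as $n \to \infty$, so $f_n \to f$ in $H$. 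This completes the proof.
\end{proof}

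There is essentially no obstacle here: the only thing to be careful about is the bookkeeping of the index shift (noting that $e_0 = Bf$, not $f$, because the iteration starts from $f_0 = Af$ rather than $f_0 = 0$), which is what produces the exponent $n+1$ rather than $n$ in the error bound. I would present the telescoped identity $e_{n+1} = B e_n$ first, since it makes the geometric decay transparent and also shows directly that the limit is $f$ without needing to invoke the Neumann series convergence separately — though mentioning the Neumann series is worthwhile because it makes explicit the sense in which $A$ is invertible, which is part of the claim.
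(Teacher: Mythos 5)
Your proof is correct, and it differs from the paper's in a small but genuine way in how the error estimate is obtained. The paper identifies the iterates explicitly as the partial sums of the Neumann series, $f_n=\sum_{k=0}^{n}(I-A)^{k}Af$, verifies that these satisfy the stated recursion, and then bounds the tail using the identity $\sum_{k=n+1}^{\infty}(I-A)^{k}=(I-A)^{n+1}A^{-1}$, so that $\|f-f_n\|=\|(I-A)^{n+1}A^{-1}Af\|\leq\gamma^{n+1}\|f\|$. You instead track the error directly: with $B=I-A$ you observe $e_{n+1}=Be_n$ and $e_0=Bf$, hence $e_n=B^{n+1}f$, which yields the same bound without ever manipulating the series or invoking $A^{-1}$ in the estimate; the Neumann series enters only to justify the invertibility claim. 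Your route is slightly more economical and makes the exponent $n+1$ transparent (it comes from $e_0=Bf$ rather than $e_0=f$), while the paper's route has the merit of exhibiting the reconstruction explicitly as a truncated Neumann series for $A^{-1}Af$, which is the conceptual point behind the algorithm. Both arguments are complete and rest on the same hypothesis $\|I-A\|\leq\gamma<1$.
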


\begin{proof}
Since the norm of the operator $I-A$ is less than $1$, operator
$A$ is invertible and, thus, its inverse $A^{-1}$ can be represented by a
Neumann series $ A^{-1}=\sum_{n=0}(I-A)^{n}. $ Thus, every $f\in
H$ is determined by $Af$, because \begin{equation} f=A^{-1}Af\sum^\infty_{n=0}(I-A)^{k}Af  \lim_{n\rightarrow\infty} \left ( \sum^n_{n=0}(I-A)^{n}Af  \right ).
\end{equation}
The sequence $f_n$ of partial sums, $ f_{n} :\sum_{k=0}^{n}(I-A)^{k}Af $, satisfies the stated recursion
relation. Indeed,
$$f_{n+1} \sum_{k=0}^{n+1}(I-A)^{k}Af=Af+\sum_{k=1}^{n+1}(I-A)Af $$
 $$
Af+(I-A)\sum_{k=0}^{n}(I-A)^{k}Af=Af+(I-A)f_{n}=f_{n}+A(f-f_{n}).
$$
Moreover, the identity
$$
\sum_{k=n+1}^{\infty}(I-A)^{k}=(I-A)^{n+1}A^{-1},
$$
implies that for all $f \in E_{\omega}(M)$ we have
$$
\|f-f_{n}\|=\|\sum_{k=n+1}^{\infty}(I-A)^{k}Af\|
\|(I-A)^{n+1}A^{-1}Af\| \ \leq \ \gamma^{n+1}\|f\|.
$$

\end{proof}

We will apply the above Lemma in the following situation.

Recall that in the previous section in the Definition 3 we fixed a
uniformly bounded partition of unity $\{\psi_{\nu}\}$ subordinated
to a family of balls $\{B(y_{\nu}, r/2)\}$ and introduced Sobolev
norms by the formula (3.5).

 Consider an $(\varepsilon,N)$-lattice $Z( \{x_{j}\},
\varepsilon,N), \varepsilon\leq r/2,$ (see Definition 2). For an
open cover by balls $\{B(x_{j}, \varepsilon/2)\}$ satisfying
Definition 2, we construct a uniformly bounded partition of unity
$\{\theta_{j}\}$ subordinated to $\{B(x_{j}, \varepsilon/2)\}$,
with the properties:

\bigskip

a) $\operatorname{supp}\theta_{j}\subset B(x_{j},\varepsilon/2),$

\bigskip

 b)
$\sup_{x}|\partial^{|\alpha|}\theta_{j}|\leq C(\alpha),$ where
$C(\alpha)$ is independent of $j.$

\bigskip

Note that the assumptions  that $\varepsilon\leq r/2$ and that the
multiplicity of the cover $\{B(y_{\nu},r)\}$, which was used in
the definition of the Sobolev norm (3.5),  is not greater than
$N=N(\mathbb{H})$, imply that each ball $B(x_{j},\varepsilon/2)$
has non-empty intersections with no more than $N(\mathbb{H})$
balls of the family $\{B(y_{\nu},r/2)\}$.

 Given a function $f\in
H^{k}(\mathbb{H}), k>1,$ we consider an operator
\begin{equation}
V_{Z,\theta}(f)=\sum_{j}f(x_{j})\theta_{j}, \theta_{j}\in
C_{0}^{\infty}(B(x_{j},\varepsilon/2)).
\end{equation}
It will be shown that for any lattice $Z=Z(\{x_{\nu}\},
\varepsilon,N), \varepsilon\leq r/2$, and corresponding uniformly
bounded partition of unity $\theta=\{\theta_{j}\}$ subordinated to
the cover $\{B(x_{j}, \varepsilon/2)\}$, the function
$V_{Z,\theta}(f)$ belongs to $L_{2}(\mathbb{H})$ as long as $f$
belongs to $H^{k}(\mathbb{H})$, where $k>1$.

To construct the operator $A$ we will need an orthogonal projection
from $L_{2}(\mathbb{H})$  on the space of $\omega$-band limited
functions $E_{\omega}(\mathbb{H})$.

We are going to describe this projection in terms of the
Helgason-Fourier transform. In order to do so, we have to introduce an
analog of the classical $sinc$ function,
$$
sinc (t)=\frac {sin\pi t}{\pi t}.
$$
\begin{defn}
We definee the hyperbolic $sinch_{\omega}(g),
g\in SL(2,\mathbb{R})$, to be such a $SO(2)$-biinvariant function on
$SL(2,\mathbb{R})$ whose Helgason-Fourier transform
$$
\widehat{sinch_{\omega}}(s)
$$
is $1$ for $|s|\leq \omega$, and $0$ for $|s|>\omega$.
\end{defn}

Now, we introduce the operator $P_{\omega}$ by the formula
\begin{equation}
P_{\omega}f= f\ast sinch_{\omega},   f\in L_{2}(\mathbb{H}).
\end{equation}

It is clear that the operator
$$
P_{\omega}:L_{2}(\mathbb{H})\rightarrow  E_{\omega}(\mathbb{H})
$$
is the orthogonal projection on the subspace of $\omega$-band
limited functions.

 The operator $A$ will be defined as
\begin{equation}
A_{Z,\theta}f=P_{\omega} V_{Z,\theta}(f),
\end{equation}
where $V_{Z,\theta}$ is defined in (4.6) and $P_{\omega}$ is the
orthogonal projection from $L_{2}(\mathbb{H})$ onto
$E_{\omega}(\mathbb{H})$.

 The next
goal is to provide a uniform estimate for the norms of the
operator $I - A_{Z, \theta}$ on the subspace $E_{\omega}(\mathbb{H})$ for
all $(\varepsilon,N)$-lattices with $\varepsilon\leq r/2.$

The following lemma provides a step in this direction.
 \begin{lem}
For any lattice $Z=Z(\{x_{j}\},\varepsilon ,N), \varepsilon<r/2,$
and any uniformly bounded partition of unity
$\theta=\{\theta_{j}\}$ subordinated to $B(x_{j},\varepsilon/2)$,
the map $V_{Z,\theta}(f)$ is a continuous operator from
$H^{k}(\mathbb{H}), k> 1,$ into $L_{2}(\mathbb{H})$. In other
words, there exists a constant $C=C(\mathbb{H},k)$, such that
\begin{equation} \|V_{Z,\theta}(f)\|_{L_{2}(\mathbb{H})}\leq
C\|f\|_{H^{k}(\mathbb{H})},
\end{equation}
  for all $f\in H^{k}(\mathbb{H})$.

 \end{lem}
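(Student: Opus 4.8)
The plan is to bound $\|V_{Z,\theta}(f)\|_{L_2(\mathbb{H})}$ by passing to the local description of the Sobolev norm given in (3.5) and reducing everything to a pointwise/Sobolev estimate on balls of $\mathbb{H}$ in geodesic coordinates, where the standard Sobolev embedding $H^k(\mathbb{R}^2)\hookrightarrow C(\mathbb{R}^2)$ for $k>1$ applies. First I would use the fact (noted just above the statement) that each ball $B(x_j,\varepsilon/2)$ meets at most $N=N(\mathbb{H})$ of the balls $B(y_\nu,r/2)$ that carry the partition of unity $\{\psi_\nu\}$, and conversely each $B(y_\nu,r/2)$ meets a bounded number of the $B(x_j,\varepsilon/2)$ — the latter because the $B(x_j,\varepsilon/4)$ are disjoint and contained in a fixed enlargement of $B(y_\nu,r/2)$, so a volume count bounds their number by a constant depending only on $N$. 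This finite-overlap structure is what lets the two families of balls be compared.

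Next I would estimate $\|\psi_\nu V_{Z,\theta}(f)\|_{H^0(\mathbb{R}^2)}$ for each $\nu$. Since $\psi_\nu V_{Z,\theta}(f) = \sum_j f(x_j)\,\psi_\nu\theta_j$ and only boundedly many indices $j$ contribute (those with $B(x_j,\varepsilon/2)\cap B(y_\nu,r/2)\neq\emptyset$), the triangle inequality gives
\begin{equation}
\|\psi_\nu V_{Z,\theta}(f)\|_{L_2} \le \sum_{j:\,B(x_j,\varepsilon/2)\cap B(y_\nu,r/2)\neq\emptyset} |f(x_j)|\,\|\psi_\nu\theta_j\|_{L_2},
\end{equation}
and by property (b) of the partitions of unity together with the volume bound $\|\psi_\nu\theta_j\|_{L_2}\le C\,\mu\big(B(x_j,\varepsilon/2)\big)^{1/2}\le C\varepsilon$, each term is controlled by $C\varepsilon\,|f(x_j)|$. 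The pointwise value $|f(x_j)|$ is in turn bounded, via Sobolev embedding in geodesic normal coordinates on a ball of fixed radius $r$ around $x_j$ and the equivalence (3.5), by a constant times the local Sobolev norm of $f$ near $x_j$; summing $|f(x_j)|^2$ over the boundedly many $j$'s attached to $\nu$ and then over $\nu$ (using finite overlap again so that each local piece of $f$ is counted at most a bounded number of times) yields $\sum_\nu \|\psi_\nu V_{Z,\theta}(f)\|_{L_2}^2 \le C\varepsilon^2 \|f\|_{H^k(\mathbb{H})}^2$. Applying (3.5) once more on the left with $\sigma=0$ gives (4.10).

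The main obstacle is making the pointwise bound $|f(x_j)|\le C\,\|f\|_{H^k(\mathbb{H})}$ (or, better, its localized version near $x_j$) genuinely uniform in $j$ and in the lattice. This is where the bounded geometry of $\mathbb{H}$ is essential: one must fix a common radius $r$ and uniform charts so that the Sobolev constant from $H^k(\mathbb{R}^2)\hookrightarrow C$ does not degenerate, and one must check that the transition between the Riemannian Sobolev norm and the coordinate Sobolev norm on $\mathbb{R}^2$ has constants independent of the base point — which holds because the metric $y^{-2}(dx^2+dy^2)$ has all covariant derivatives of curvature bounded and injectivity radius bounded below, exactly the hypotheses under which (3.5) was asserted. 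Once this uniformity is in hand, the factor $\varepsilon\le r/2$ can simply be absorbed into the constant $C=C(\mathbb{H},k)$, and keeping track of the explicit $\varepsilon$ (as the displayed sketch does) will be useful for the subsequent estimates on $I-A_{Z,\theta}$.
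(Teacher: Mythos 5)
Your overall strategy (localize via (3.5), bound $\|\theta_j\|_{L_2}$ by the volume of an $\varepsilon/2$-ball, control $|f(x_j)|$ by a local Sobolev embedding, sum using finite overlaps) is the right family of ideas, but two of your key claims fail, and they are exactly where the uniformity in the lattice must come from. First, the assertion that each $B(y_\nu,r/2)$ meets only a bounded number of the balls $B(x_j,\varepsilon/2)$ is false: the volume count you invoke compares $\mu(B(y_\nu,r))$ with $\mu(B(x_j,\varepsilon/4))$ and therefore gives a bound of order $(r/\varepsilon)^2$, which blows up as $\varepsilon\to 0$, not a constant depending only on $N$. (The overlap that \emph{is} bounded, and the only one the paper uses, is the reverse one: each $B(x_j,\varepsilon/2)$ meets at most $N$ of the $B(y_\nu,r/2)$, together with the multiplicity $\le N$ of the cover $\{B(x_j,\varepsilon)\}$ itself.) Consequently your triangle-inequality step, and the subsequent summation "over the boundedly many $j$'s attached to $\nu$", do not yield constants independent of $\varepsilon$. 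Second, your claimed conclusion $\sum_\nu\|\psi_\nu V_{Z,\theta}(f)\|_{L_2}^2\le C\varepsilon^2\|f\|_{H^k}^2$, i.e. $\|V_{Z,\theta}(f)\|\le C\varepsilon\|f\|_{H^k}$, cannot be correct: since $\sum_j\theta_j\equiv 1$, Lemma 4.3 gives $\|f-V_{Z,\theta}(f)\|\le C\varepsilon\|(I+\Delta)^{k/2}f\|$, so for a fixed nonzero smooth $f$ the norm $\|V_{Z,\theta}(f)\|$ stays comparable to $\|f\|$ as $\varepsilon\to0$ rather than tending to zero; an $O(\varepsilon)$ upper bound is impossible.

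The repair is essentially the paper's proof. Instead of the triangle inequality over all $j$ attached to $\nu$, use the pointwise finite overlap of the supports of the $\theta_j$ (at each point at most $N$ of them are nonzero, and $0\le\theta_j\le C$) to get the almost-orthogonal estimate $\|\psi_\nu\sum_j f(x_j)\theta_j\|^2_{L_2}\le C\sum_j |f(x_j)|^2\,\mu\bigl(B(x_j,\varepsilon/2)\bigr)\le C\varepsilon^2\sum_j|f(x_j)|^2$, hence $\|V_{Z,\theta}(f)\|\le C\varepsilon\bigl(\sum_j|f(x_j)|^2\bigr)^{1/2}$. Then, instead of a Sobolev embedding on balls of fixed radius $r$ about $x_j$ (whose overlaps are again of order $(r/\varepsilon)^2$), use the scaled local inequality on balls of radius $\varepsilon$, $|f(x_j)|\le C\varepsilon^{-1}\|f\|_{H^k(B(x_j,\varepsilon))}$, $k>1$; the balls $B(x_j,\varepsilon)$ have multiplicity at most $N$, so $\bigl(\sum_j|f(x_j)|^2\bigr)^{1/2}\le C\varepsilon^{-1}\|f\|_{H^k(\mathbb{H})}$. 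The factors $\varepsilon$ and $\varepsilon^{-1}$ cancel, giving (4.9) with $C=C(\mathbb{H},k)$ uniform over all lattices with $\varepsilon<r/2$, which is what the application to $I-A_{Z,\theta}$ in Theorem 4.4 requires.
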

\begin{proof}

 According to (3.5) we have
$$
 \|V_{Z,\theta}(f)\|^{2}_{L_{2}(\mathbb{H})}\sum_{\nu}\|\psi_{\nu}V_{Z,\theta}f\|^{2}_{L_{2}(B(y_{\nu},r))}
$$
and
$$
\|\psi_{\nu}V_{Z,\theta}f\|^{2}_{L_{2}(B(y_{\nu},r))}\leq
\|\psi_{\nu}\sum_{j}f(x_{j})\theta_{j}\|^{2}_{L_{2}(B(y_{\nu},r))}\leq
C\varepsilon\sum_{j}|f(x_{j})|^{2},
$$
where supp $\theta_{j}$ is in $B(x_{j},\varepsilon/2)$ and $C$
depends on the multiplicity $N=N(\mathbb{H})$. In other words,
\begin{equation}
 \|V_{Z,\theta}(f)\|_{L_{2}(\mathbb{H})}\leq C\varepsilon^{1/2}
 \left(\sum_{j}|f(x_{j})|^{2}\right)^{1/2}.
 \end{equation}

 On the other hand, by a known inequality for $\mathbb{R}^{2}$,

$$
|f(y)|\leq C\sum_{0\leq m\leq
k}\varepsilon^{m-1}\|f\|_{H^{m}(B(x_{j},\varepsilon))}, k>1,
C=C(k),$$ where $y\in B(x_{j},\varepsilon/2), f \in
C^{\infty}(B(x_{j},\varepsilon))$, we obtain
$$
|f(x_{j})|\leq C\sup_{x\in B(x_{j}, \varepsilon/2)}|f(x)|\leq
C\varepsilon^{-1}\|f\|_{H^{k}(B(x_{j},\varepsilon))},
$$
where $k>1, C=C(k),$ and then

$$
\left(\sum_{j}|f(x_{j})|^{2}\right)^{1/2}\leq
C\varepsilon^{-1}\left(\sum_{j}\|f\|^{2}_{H^{k}(B(x_{j},\varepsilon))}\right)^{1/2}.
$$

The last inequality and the inequality (4.10) give that there
exists a constant $C$ which depends on smoothness $k$ and
multiplicity $N(\mathbb{H})$, for which (4.9) holds true.

\end{proof}

 We will need the following Lemma 4.3 from \cite{FP04}.
\begin{lem}
For any $k>1$ there exist a constant $C=C(\mathbb{H},k)>0, $ such
that for any small  $0<\varepsilon<r/2$ and any
$Z(\{x_{j}\},\varepsilon,N)$ the following inequality holds true

\begin{equation}
\sum_{j}\|f-f(x_{j})\|_{B(x_{j},\varepsilon/2)} \ \leq \ C
\varepsilon \|(I+\Delta)^{k/2}f\|, k>1.
 \end{equation}

\end{lem}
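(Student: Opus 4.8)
The claimed inequality (4.11) compares a sum of $L_2$-deviations $\|f - f(x_j)\|_{L_2(B(x_j,\varepsilon/2))}$ — where $f(x_j)$ denotes the constant function equal to the value of $f$ at the center — with $\varepsilon\|(I+\Delta)^{k/2}f\|$. The strategy is entirely local: on each ball $B(x_j,\varepsilon/2)$ pass to geodesic normal coordinates, so that the ball is comparable to a Euclidean ball of radius $\varepsilon/2$ with a metric uniformly close to the flat one (the comparison constants being independent of $j$ and $\varepsilon$, for $\varepsilon$ small, because $\mathbb{H}$ has bounded geometry — constant curvature, in fact). There one invokes the standard Euclidean estimate: for a smooth function $u$ on a ball $B$ of radius $\rho$ in $\mathbb{R}^2$,
\[
\|u - u(\text{center})\|_{L_2(B)} \ \leq \ C\,\rho\,\|u\|_{H^k(B)}, \qquad k>1,
\]
which follows from a Poincaré-type inequality combined with the Sobolev embedding $H^k(\mathbb{R}^2)\hookrightarrow C^0$ for $k>1$ that lets the point value $u(\text{center})$ make sense. (Scaling from the unit ball gives the factor $\rho = \varepsilon/2$; one must be slightly careful since the Sobolev norm does not scale homogeneously, but the inhomogeneous norm only makes the estimate better for small $\rho$, so the single power of $\varepsilon$ is what survives.) Summing over $j$ and transferring back to $\mathbb{H}$:
\[
\sum_j \|f - f(x_j)\|_{L_2(B(x_j,\varepsilon/2))} \ \leq \ C\varepsilon \sum_j \|f\|_{H^k(B(x_j,\varepsilon))}.
\]

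**From the sum of norms to a single norm.** The remaining point is to replace $\sum_j \|f\|_{H^k(B(x_j,\varepsilon))}$ by a constant times $\|(I+\Delta)^{k/2}f\|$. First, $\sum_j \|f\|_{H^k(B(x_j,\varepsilon))} \leq N^{1/2}\big(\sum_j \|f\|_{H^k(B(x_j,\varepsilon))}^2\big)^{1/2}$ by Cauchy–Schwarz together with the finite-multiplicity property 3) of the lattice (the balls $B(x_j,\varepsilon)$ cover $\mathbb{H}$ with multiplicity at most $N$), and the sum of squared local $H^k$-norms over a cover of bounded multiplicity is equivalent to the global $H^k(\mathbb{H})$-norm — this is precisely the mechanism behind the equivalence (3.5), now applied with the $\{B(x_j,\varepsilon)\}$ cover in place of the fixed $\{B(y_\nu,r)\}$ cover, which is legitimate because $\varepsilon < r/2$ guarantees the required compatibility of the two covers (each small ball meets at most $N$ of the fixed ones, as noted just before (4.6)). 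Finally $\|f\|_{H^k(\mathbb{H})} \simeq \|(I+\Delta)^{k/2}f\|$ by the very definition of the Sobolev spaces via the self-adjoint operator $(-\Delta)^{k/2}$ and the graph norm, together with the spectral gap $\langle -\Delta f, f\rangle \geq \tfrac14\|f\|^2$, which makes $\|(I+\Delta)^{k/2}f\|$ control $\|f\|$ as well. Chaining these inequalities yields (4.11) with a constant $C = C(\mathbb{H},k)$.

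**The main obstacle.** The genuinely delicate point is the uniformity of all constants in $\varepsilon$ as $\varepsilon \to 0$ and in $j$. One must be sure that the normal-coordinate charts on $B(x_j,\varepsilon)$ have metric coefficients, and Jacobians relating $d\mu$ to Lebesgue measure, that are controlled by absolute constants once $\varepsilon$ is below a fixed threshold — and that the $G$-invariance of the geometry makes this threshold and these constants independent of which lattice point $x_j$ we sit at. This is exactly the "bounded geometry" input and is where the homogeneity $\mathbb{H} = G/K$ does the work: it suffices to verify everything in one chart around $i$ and translate by $g_j$. A secondary technical nuisance is the non-homogeneous scaling of the $H^k$-norm under $x\mapsto x/\rho$; one handles it by noting that on balls of radius $\le \rho \le 1$ the lower-order terms are dominated by the top one up to a constant, so the factor extracted is $\rho^1$ rather than $\rho^k$, which is all that is claimed. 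Once these uniformity statements are in hand, the rest is the routine chain of inequalities sketched above, and one cites \cite{FP04} for the full details since (4.11) is quoted from there.
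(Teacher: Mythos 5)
The paper offers no proof of this lemma at all --- it is simply quoted from \cite{FP04} --- so your argument has to stand on its own. Its local part does: working in geodesic coordinates around each $x_j$ (with uniformity in $j$ and in $\varepsilon$ supplied by the homogeneity of $\mathbb{H}$), the Poincar\'e--Sobolev (Bramble--Hilbert type) estimate $\|u-u(0)\|_{L_2(B_\rho)}\le C\rho\,\|u\|_{H^k(B_\rho)}$ for $k>1$ in dimension two, with $C$ independent of $\rho\le 1$ after the scaling you describe, is correct and yields $\|f-f(x_j)\|_{L_2(B(x_j,\varepsilon/2))}\le C\varepsilon\,\|f\|_{H^k(B(x_j,\varepsilon))}$ with constants independent of $j$ and $\varepsilon$.

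The genuine gap is the aggregation step. You pass from $\sum_j\|f\|_{H^k(B(x_j,\varepsilon))}$ to $N^{1/2}\bigl(\sum_j\|f\|^2_{H^k(B(x_j,\varepsilon))}\bigr)^{1/2}$ ``by Cauchy--Schwarz together with the finite multiplicity.'' Cauchy--Schwarz bounds an $\ell^1$ sum by $(\#\{j\})^{1/2}$ times the $\ell^2$ sum, and the number of lattice points is infinite because $\mathbb{H}$ is non-compact; the multiplicity $N$ controls how many balls overlap at a given point, not how many terms the sum has, so it cannot play the role you assign to it. What bounded overlap actually gives is $\sum_j\|f\|^2_{H^k(B(x_j,\varepsilon))}\le C\,N\,\|f\|^2_{H^k(\mathbb{H})}$, i.e.\ control of the $\ell^2$ aggregate. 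Consequently your argument proves
\[
\Bigl(\sum_j\|f-f(x_j)\|^2_{L_2(B(x_j,\varepsilon/2))}\Bigr)^{1/2}\ \le\ C\,\varepsilon\,\|(I+\Delta)^{k/2}f\|,
\]
but not the stated $\ell^1$ inequality (4.11), and no patch over an infinite index set can bridge the two: for a general $f\in H^k(\mathbb{H})$ the local norms are square-summable but need not be summable, so the left-hand side of (4.11) as literally printed can even be infinite. (The square-sum version is in fact all that the proof of Theorem 4.4 needs, provided one exploits the bounded overlap of the supports of the $\theta_j$ instead of the plain triangle inequality.) To complete your write-up you should either prove and use the $\ell^2$ form, or explain where absolute summability over $j$ comes from --- your current argument does not supply it, and appealing to \cite{FP04} at the end is a citation, not a proof.
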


\begin{thm} For a given $k>1$
there exist a constant $C=C(\mathbb{H},k)>0$
 such that for any
lattice $Z(\{x_{j}\}, \varepsilon, N),$ with sufficiently small
$\varepsilon>0$ and for any $f\in E_{\omega}(\mathbb{H})$
\begin{equation}
\|f-A_{Z,\theta}f\| \ \leq  \ C \ \varepsilon(1+\omega^{2})^{k/2}\
\|f\|.
\end{equation}
Consequently, for fixed $k>1$ and $\omega>0$ it is true that for
any $\varepsilon>0$ satisfying
$$ \varepsilon<
\left(C(1+\omega^{2})^{k/2}\right)^{-1},
$$
one has
$$
\|f-A_{Z,\theta}f\| \  \leq \ \gamma \|f\| \quad \mbox{ where }
\quad \gamma=C\varepsilon(1+\omega^{2})^{k/2}<1 .
$$
\end{thm}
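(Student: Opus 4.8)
The plan is to estimate $\|f - A_{Z,\theta}f\|$ for $f \in E_\omega(\mathbb{H})$ by exploiting that $P_\omega$ is the orthogonal projection onto $E_\omega(\mathbb{H})$, so that $P_\omega f = f$. Thus we can write
$$
f - A_{Z,\theta}f = P_\omega f - P_\omega V_{Z,\theta}(f) = P_\omega\bigl(f - V_{Z,\theta}(f)\bigr),
$$
and since $P_\omega$ is a projection, $\|f - A_{Z,\theta}f\| \le \|f - V_{Z,\theta}(f)\|$. So everything reduces to bounding $\|f - V_{Z,\theta}(f)\|_{L_2(\mathbb{H})}$ in terms of $\varepsilon$ and a Sobolev norm of $f$.

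First I would use the fact that $\{\theta_j\}$ is a partition of unity subordinated to $\{B(x_j,\varepsilon/2)\}$, so $\sum_j \theta_j = 1$ and hence $f = \sum_j f\,\theta_j$ pointwise. Therefore
$$
f - V_{Z,\theta}(f) = \sum_j \bigl(f - f(x_j)\bigr)\theta_j,
$$
where each summand is supported in $B(x_j,\varepsilon/2)$. Using the bounded-overlap property of the cover $\{B(x_j,\varepsilon/2)\}$ (which follows from the $(\varepsilon,N)$-lattice structure, property~3 of Lemma~3.3) together with the uniform bounds on $\theta_j$ from Definition~3, the $L_2$-norm of this sum is controlled by a constant times $\bigl(\sum_j \|f - f(x_j)\|_{B(x_j,\varepsilon/2)}^2\bigr)^{1/2}$, and in fact by $\sum_j \|f - f(x_j)\|_{B(x_j,\varepsilon/2)}$. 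At this point Lemma~4.3 (the cited Lemma~4.3 from \cite{FP04}) applies directly and gives
$$
\sum_j \|f - f(x_j)\|_{B(x_j,\varepsilon/2)} \le C\,\varepsilon\,\|(I+\Delta)^{k/2}f\|.
$$
Combining these yields $\|f - A_{Z,\theta}f\| \le C\varepsilon\|(I+\Delta)^{k/2}f\|$ for some $C = C(\mathbb{H},k)$.

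The last step is to convert $\|(I+\Delta)^{k/2}f\|$ into $(1+\omega^2)^{k/2}\|f\|$ using that $f$ is $\omega$-band limited. This is exactly the Bernstein-type phenomenon: by the Plancherel theorem and formula~(3.1), $\widehat{(I+\Delta)^{k/2}f} = (1 + s^2 + \tfrac14)^{k/2}\hat f(s,\varphi)$, and since $\hat f$ is supported in $|s|\le\omega$, the multiplier is bounded by $(1+\omega^2+\tfrac14)^{k/2} \le C(1+\omega^2)^{k/2}$ there; hence $\|(I+\Delta)^{k/2}f\| \le C(1+\omega^2)^{k/2}\|f\|$, which is a routine variant of Theorem~3.2. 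Substituting gives inequality~(4.12) with $\gamma = C\varepsilon(1+\omega^2)^{k/2}$, and the stated constraint on $\varepsilon$ forces $\gamma < 1$, completing the argument. I expect the only genuinely delicate point to be the first estimate, namely justifying that the $L_2$-norm of $\sum_j(f - f(x_j))\theta_j$ is bounded by a constant (depending on $N$ but not on $\varepsilon$) times the $\ell^1$-type sum $\sum_j\|f-f(x_j)\|_{B(x_j,\varepsilon/2)}$; once Lemma~4.3 is invoked, the rest is bookkeeping with the band-limited hypothesis.
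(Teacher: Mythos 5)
Your proposal follows essentially the same route as the paper's proof: reduce to $\|f-V_{Z,\theta}(f)\|$ via $P_\omega f=f$ and the contractivity of the orthogonal projection, decompose $f-V_{Z,\theta}(f)=\sum_j(f-f(x_j))\theta_j$ using the partition of unity, bound this by the sum $\sum_j\|f-f(x_j)\|_{B(x_j,\varepsilon/2)}$ and invoke Lemma 4.3, then convert $\|(I+\Delta)^{k/2}f\|$ to $(1+\omega^2)^{k/2}\|f\|$ by the Bernstein inequality for $E_\omega(\mathbb{H})$. The argument is correct and matches the paper's, with only the cosmetic difference that you flag the bounded-overlap justification for passing to the $\ell^1$-type sum, which the paper handles directly by the triangle inequality since $0\leq\theta_j\leq 1$.
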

\begin{proof}

For any $f\in E_{\omega}(M)$ we have
$$
\|f-A_{Z,\theta}f\|=\|P_{\omega}f-P_{\omega}V_{Z,\theta}(f)\|\leq
\|f-V_{Z,\theta}(f)\|.
$$
Next, since   the following identity holds true $$
f(x)=\sum_{j=j_{1}}^{j_{N_{M}}}\theta_{j}(x)f(x),
$$
we obtain
$$ \|f-V_{X}(f)\|\leq
\|f-\sum_{j}\theta_{j}f(x_{j})\|\leq
\|\sum_{j}\theta_{j}f-\sum_{j}\theta_{j}f(x_{j})\|\leq
$$
$$
\sum_{j}\|f-f(x_{j})\|_{L_{2}(B(x_{j},\varepsilon/2)},
 $$
where the  sum  can be estimated by using our inequality (4.11).

  We have
$$
\|f-A_{Z,\theta}f\|\leq C\varepsilon\|(I+\Delta)^{k/2}f\|,
$$
for any $k>1$.

 Because, for $f\in E_{\omega}(\mathbb{H})$, the Bernstein
inequality
$$
\|\Delta^{k}f\|\leq \omega^{2k}\|f\|
$$
holds true, it yields the inequality
$$
\|f-A_{Z,\theta}f\|\leq C\varepsilon (1+\omega^{2})^{k/2}\|f\|.
$$

 For a fixed
$\omega>0$ and $k>1$, because $C=C(\mathbb{H}, k)$ depends only
on $\mathbb{H}$ and $k$, we can choose
$$
\varepsilon< \left(C(1+\omega^{2})^{k/2}\right)^{-1},
$$
such that corresponding
$$
\gamma=C\varepsilon(1+\omega^{2})^{k/2}
$$
is less than 1. Theorem is proved.
\end{proof}

Combining this result with  Lemma 4.1 we obtain
 \begin{thm} For a given $k>1$ and
$\omega>0$ choose an $\varepsilon>0$ such that $$
\varepsilon<(C)^{-1}(1+\omega^{2})^{-k/2}, $$ where the constant
$C=C(\mathbb{H},k)$ taken from Theorem 4.4. Then, for any
$(\varepsilon, N)$-lattice $Z(\{x_{j}\},\varepsilon, N)$ and for
any corresponding operator $A=A_{Z,\theta}$ in the space
$E_{\omega}(\mathbb{H})$ define inductively, starting from

$f_{0}=Af$:
$$
f_{n+1}=f_{n}+A(f-f_{n}).
$$
Then in the space $L_{2}(\mathbb{H})$ the following convergence
holds true
$$ \lim_{n\rightarrow\infty}f_{n}=f \ \mbox{in } \  L_{2}(\mathbb{H}),
$$
with the error estimate $$
 \ \mbox{and} \quad
\|f-f_{n}\|\leq\gamma^{n+1}\|f\|, $$ where
$\gamma=C\varepsilon(1+\omega^{2})^{k/2}<1.$

\end{thm}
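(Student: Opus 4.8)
The plan is to simply combine Theorem 4.4 with the abstract Neumann-series Lemma 4.1, so the proof is essentially a bookkeeping exercise with no genuinely new ingredient. First I would fix $k>1$ and $\omega>0$, and observe that Theorem 4.4 (together with Lemma 4.2, which guarantees $V_{Z,\theta}$, and hence $A=A_{Z,\theta}=P_\omega V_{Z,\theta}$, is a bounded operator on $E_\omega(\mathbb{H})$ since $E_\omega(\mathbb{H})\subset H^k(\mathbb{H})$) yields, for the prescribed range $\varepsilon<\bigl(C(1+\omega^{2})^{k/2}\bigr)^{-1}$, the key contractivity estimate
$$
\|f-A f\|\ \leq\ \gamma\,\|f\|,\qquad \gamma=C\varepsilon(1+\omega^{2})^{k/2}<1,
$$
valid for all $f\in E_\omega(\mathbb{H})$. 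Here $C=C(\mathbb{H},k)$ is exactly the constant from Theorem 4.4, so the smallness condition on $\varepsilon$ is meaningful and independent of the particular lattice.

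Next I would apply Lemma 4.1 with $H=E_\omega(\mathbb{H})$ (a closed subspace of $L_2(\mathbb{H})$ by Theorem 3.1, hence itself a Hilbert space) and $A=A_{Z,\theta}$ restricted to $E_\omega(\mathbb{H})$. The hypothesis $\|f-Af\|\le\gamma\|f\|$ with $\gamma<1$ is precisely what the previous step established, so Lemma 4.1 gives directly that $A$ is invertible on $E_\omega(\mathbb{H})$, that the iteration $f_0=Af$, $f_{n+1}=f_n+A(f-f_n)$ converges to $f$ in the $L_2(\mathbb{H})$ norm, and that $\|f-f_n\|\le\gamma^{n+1}\|f\|$. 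One small remark worth inserting is that the samples $\{f(x_j)\}$ determine $Af=P_\omega V_{Z,\theta}(f)=P_\omega\sum_j f(x_j)\theta_j$, so the iteration is genuinely driven by the sampled data; each $f_n$ is a band-limited function computable from the $f(x_j)$ alone.

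The only point requiring a word of care — and the closest thing to an ``obstacle'' — is the verification that $A$ maps $E_\omega(\mathbb{H})$ into itself and is bounded there, so that Lemma 4.1 is applicable: this follows because $P_\omega$ projects onto $E_\omega(\mathbb{H})$ and $V_{Z,\theta}$ is bounded $H^k\to L_2$ by Lemma 4.2, while $E_\omega(\mathbb{H})\hookrightarrow H^k(\mathbb{H})$ continuously by Theorem 3.1 (indeed by the Bernstein inequality $\|(I+\Delta)^{k/2}f\|\le(1+\omega^2)^{k/2}\|f\|$). Everything else is a direct citation. I would therefore present the proof in three short sentences: quote Theorem 4.4 for the contraction bound, note the boundedness of $A$ on $E_\omega(\mathbb{H})$, and invoke Lemma 4.1 to conclude convergence and the geometric error estimate with $\gamma=C\varepsilon(1+\omega^2)^{k/2}<1$.
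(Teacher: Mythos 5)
Your proposal is correct and follows exactly the paper's route: the theorem is obtained there by combining the contraction estimate of Theorem 4.4 with the Neumann-series Lemma 4.1, which is precisely your argument. Your extra remark verifying that $A=P_\omega V_{Z,\theta}$ is bounded on $E_\omega(\mathbb{H})$ (via Lemma 4.2 and the Bernstein inequality) is a sensible detail the paper leaves implicit, but it does not change the approach.
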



\begin{thebibliography}{22}




\bibitem{B}
J.~Benedetto, {\em Irregular sampling and frames}, Wavelets:  A
tutorial in Theory and Applications (C.K.Chui, ed.), Academic
Press, Boston, 1992, pp. 445--507.



\bibitem{BT1}
C.~Berenstein, E. Casadio Tarabusi, {\em Integral geometry in
hyperbolic spaces and electrical empendance tomography}, SIAM
J.Appl. Math. 56(1996), no.3, 755-764.

\bibitem{B64}
A.~Beurling, {\em Local Harmonic analysis with some applications
to differential operators}, Some Recent Advances in the Basic
Sciences, vol. 1, Belfer Grad. School Sci. Annu. Sci. Conf. Proc.,
A.Gelbart, ed., 1963-1964, 109-125.

\bibitem{BM67}
A.~Beurling and P.~Malliavin, {\em On the closure of characters
and the zeros of entire functions}, Acta Math.,\textbf{118},
(1967), 79-95.



\bibitem{FG92}
H.~Feichtinger and K.~ Gr\"ochenig, {\em Iterative reconstruction
of multivariate band-limited functions from irregular sampling
values}, SIAM J.\ Math.\  Anal., \textbf{23}, (1992), 244-261.

\bibitem{FG92b} H.~Feichtinger and K.~Gr\"ochenig,  {\em
Irregular sampling theorems and series expantions of band-limited
functions}, Journ.\ Math.\ Anal.\ Appl., \textbf{167}, (1992),
530-556.


\bibitem{FG94} H.~Feichtinger and K.~Gr\"ochenig,
{\em Theory and practice of irregular sampling. Wavelets:
mathematics and applications}, 305--363, Stud.\ Adv.\ Math., CRC,
Boca Raton, FL, 1994.



\bibitem{FP04} H.~Feichtinger and I.~Pesenson,
{\em Iterative recovery of band limited functions on manifolds},
Contemp. Math., 2004, 137-153.




\bibitem {Helg1}
S.~Helgason, {\em Differential Geometry and Symmetric
Spaces},Academic, N.Y., 1962.


\bibitem {K1}
A.~Kempf, {\em Aspects of information theory in curved space},
Talk presented at 10-th Conf. Gen. Rel. and Rel. Astrophysics,
Guelph,Canada, 28-31 May 2003, e-Print Archive: gr-qc/0306104.


\bibitem {K2} A.~Kempf, {\em A covariant ultraviolet cutoff in
curved space-time}, Phys.Rev.Lett. 92:221301, 2004.


\bibitem{KPes90}
S.~Krein, I.~Pesenson, {\em Interpolation Spaces and Approximation
on Lie Groups}, The Voronezh State University, Voronezh, 1990,
(Russian).

\bibitem{Lan67}
H.~Landau, {\em Necessary density conditions for sampling and
interpolation of certain entire functions}, Acta. Math.,
\textbf{117}, (1967), 37-52.


\bibitem{PW34}
R.E.A.C.~Paley and N.~Wiener, {\em Fourier Transforms in the
Complex Domain}, Coll. Publ., \textbf{19},  Providence: Amer.
Math. Soc., (1934).

\bibitem{Pes89}
I.~Pesenson, {\em The Best Approximation in a Representation Space
of a Lie Group}, Dokl. Acad. Nauk USSR, v. 302, No 5, pp.
1055-1059, (1988) (Engl. Transl. in Soviet Math. Dokl., v.38, No
2, pp. 384-388, 1989.)

\bibitem{Pes90}
 I.~Pesenson, {\em  The Bernstein Inequality in the Space of Representation
 of Lie group}, Dokl. Acad. Nauk USSR {\bf 313} (1990), 86--90;
 English transl. in Soviet Math. Dokl. {\bf 42} (1991).

\bibitem{Pes95} I. ~Pesenson, {\em Lagrangian splines,
 Spectral Entire  Functions and Shannon-Whittaker Theorem on Manifolds},
 Temple University Research Report 95-87,  (1995), 1-28.

\bibitem{Pes98a}
I.~Pesenson, {\em  Reconstruction of Paley-Wiener functions on the
Heisenberg group}, Amer. Math. Soc. Transl. (2) Vol. {\bf 184},
(1998), 207- 216.

\bibitem{Pes98b}
I.~Pesenson, {\em Sampling of Paley-Wiener functions on stratified
groups}, J. of Fourier Analysis and Applications {\bf 4} (1998),
269--280.

\bibitem{Pes99} I. ~Pesenson, {\em Reconstruction of band-limited functions
in $L_{2} (R^{d}),$} Proceed. of AMS, Vol.127(12), (1999), 3593-
3600.

\bibitem{Pes00}
 I.~Pesenson, {\em  A sampling theorem on homogeneous manifolds},
 Trans.\ of AMS, Vol. 352(9), (2000), 4257-4270.

\bibitem{Pes01}
I.~Pesenson, {\em Sampling of Band limited vectors}, J. of Fourier
Analysis and Applications {\bf 7}(1), (2001), 93-100 .


\bibitem{Pes04a}
I.~Pesenson, {\em Poincare-type inequalities and reconstruction of
Paley-Wiener functions on manifolds}, J. of Geometric Analysis
{\bf 14}(1), (2004), 101-121.




\bibitem{Pes04b}
I.~Pesenson, {\em Deconvolution of band limited functions on
symmetric spaces}, will appear in the Houston J. of Math.


\bibitem{T}
A.~Terras, {\em Harmonic analysis on symmetric spaces and
applications}, Springer-Verlag, 1985.

\end{thebibliography}
\end{document}